\def\@fnsymbol#1{\ensuremath{\ifcase#1\or  \ddagger\or
		\mathsection\or \mathparagraph\or \|\or **\or \dagger\dagger
		\or \ddagger\ddagger \else\@ctrerr\fi}}
\newtheorem{theorem}{Theorem}
\newtheorem{problem}{Problem}
\newtheorem{proposition}{Proposition}
\newtheorem{corollary}{Corollary}
\newtheorem{conjecture}{Conjecture}
\providecommand{\keywords}[1]{\textbf{\textit{Keywords:}} #1}
\title{On the palindromic Hosoya polynomial of trees}
\author[1]{Dmitry Badulin}
\author[2]{Alexandr Grebennikov}
\author[3]{Konstantin Vorob'ev}
\affil[1]{Lomonosov Moscow State University, Moscow, Russia; E-mail: badulind@bk.ru}
\affil[2]{St. Petersburg State University, Saint-Petersburg, Russia; E-mail: sagresash@yandex.ru}
\affil[3]{Sobolev Institute of Mathematics, Novosibirsk, Russia; E-mail: konstantin.vorobev@gmail.com}
\begin{document}

\maketitle

\begin{abstract}

A graph $G$ on $n$ vertices of diameter $D$ is called $H$-palindromic if $\alpha(G,k) = \alpha(G,D-k)$ for all $k=0, 1, \dots, \left \lfloor{\frac{D}{2}}\right \rfloor$, where $\alpha(G,k)$ is the number of unordered pairs of vertices at distance $k$.
Quantities $\alpha(G,k)$ form coefficients of the Hosoya polynomial.
In 1999, Caporossi, Dobrynin, Gutman and Hansen showed that there are exactly five $H$-palindromic trees of even diameter and conjectured that there are no such trees of odd diameter. We prove this conjecture for bipartite graphs. An infinite family of $H$-palindromic trees of diameter $6$
is also constructed.

\end{abstract}

\keywords{Hosoya polynomial, Wiener index, tree}


\section{Introduction}\label{S:Introduction}

Let $G=(V,E)$ be an undirected  connected graph without loops and multiple edges.
The distance $d(x,y)$ between vertices $x,y\in V$ is the number of edges in the shortest path connecting $x$ and $y$ in $G$.
The maximal distance between vertices of a graph is called its diameter $D$.
The Hosoya polynomial of a graph $G$ of diameter $D$ is defined as
$$
H(G,\lambda)=\sum_{k=0}^{D} \alpha(G,k)\lambda^k,
$$
where $\alpha(G,k)$ is equal to the number of unordered pairs of vertices at distance $k$ in $G$.
Clearly, $\alpha(G,0)=|V|$ and $\alpha(G,1)=|E|$.
This polynomial was first proposed by Hosoya under the name Wiener polynomial in 1988 \cite{zbMATH04029576}.
It was studied for various classes of abstract and molecular graphs.
Historical remarks and the bibliography on the Hosoya polynomial can be found in
\cite{GZDI2012}.
The Wiener index $W(G)$ is a distance-based graph invariant defined as the sum
of distances over all unordered pairs of vertices of a graph $G$.
Therefore, it can be presented through the coefficients of the Hosoya polynomial as follows
$$
W(G)=\sum_{k=1}^{D} \alpha(G,k) k,
$$
that is the Wiener index can be calculated as the first derivative of $H(G, \lambda)$ at $\lambda=1$.
This index was introduced by Harry Wiener for molecular graphs of alkanes that are trees in 1947 \cite{W1947}.
It has numerous applications in organic chemistry (see, for example, reviews \cite{zbMATH01656310,zbMATH01783464,zbMATH06680393}).

A graph $G$ is $H$-palindromic if the
equality $\alpha(G,k) = \alpha(G,D-k)$ holds for all $k=0, 1, \dots, \left \lfloor{\frac{D}{2}}\right \rfloor$.
The quantity
$$
Z(G)=\sum_{k=0}^{\left \lfloor{\frac{D}{2}}\right \rfloor}{\large| \alpha(G,k)-\alpha(G,D-k) \large| }
$$
is called the distance to $H$-palindromicity of a graph $G$.
Clearly, a graph is $H$-palindromic if and only if its distance to $H$-palindromicity equals $0$.
It is known that the Wiener index of $H$-palindromic trees $T$ depends
only on the number of vertices $n$ and the diameter $D$:
$W(T)=D\frac{n(n+1)}{4}$ \cite{CDGH1999}.

Some families of $H$-palindromic cyclic graphs have been constructed in \cite{D1994GTNNY}.
Based on computer experiments, Gutman conjectured that there are no $H$-palindromic trees \cite{G1993}
(see also \cite{GEI1999}). Only five palindromic trees were found by exhaustive computer search among 
all trees with $n \leq 26$ vertices \cite{CDGH1999}.
The following table shows the number of vertices, diameter and coefficients of the palindromic Hosoya polynomial of these trees.

\begin{table}[h]
	\centering
	\begin{tabular}{ c|c|c|c}
		\hline
		$T$ & $n$ & $D$ & $\left(\alpha(T,0), \alpha(T,1), \dots, \alpha(T,D)\right)$ \\ \hline
		$T_1$  & $21$ & $8$ & $(21, 20, 34, 25, 31, 25, 34, 20, 21)$ \\
		$T_2$  & $22$ & $6$ & $(22, 21, 52, 63, 52, 21, 22)$ \\
		$T_3$  & $22$ & $6$ & $(22, 21, 52, 63, 52, 21, 22)$ \\
		$T_4$  & $24$ & $8$ & $(24, 23, 39, 41, 46, 41, 39, 23, 24)$ \\
		$T_5$  & $24$ & $8$ & $(24, 23, 37, 41, 50, 41, 37, 23, 24)$ \\
		\hline
	\end{tabular}
	\caption{$H$-palindromic trees of diameter $D$ with $n$ vertices.}
\end{table}

Some necessary conditions for the existence of $H$-palindromic trees of odd diameter were found
and the following conjectures were formulated in \cite{CDGH1999} (see also \cite{zbMATH01656310}).

\begin{conjecture}{\label{Con1}}
For all trees with $n>4$ vertices and odd diameter the distance to $H$-palindromicity 
is at least $\left \lceil{\frac{n}{2}}\right \rceil$.
\end{conjecture}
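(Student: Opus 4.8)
The plan is to exploit the bipartiteness of $T$ together with the parity mismatch forced by an odd diameter. Write $V = A \cup B$ for the (unique) bipartition of the tree, and set $a = |A|$, $b = |B|$, so that $a + b = n$. Since any path between two vertices of the same part has even length and any path between vertices of different parts has odd length, the coefficients $\alpha(T,k)$ split cleanly by parity: the total number of pairs at even distance is $S_{\mathrm{even}} = \sum_{k \text{ even}} \alpha(T,k) = n + \binom{a}{2} + \binom{b}{2}$ (the summand $n$ coming from $\alpha(T,0)$), while the total at odd distance is $S_{\mathrm{odd}} = \sum_{k \text{ odd}} \alpha(T,k) = ab$. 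A direct computation then gives the key identity
\[
S_{\mathrm{even}} - S_{\mathrm{odd}} = \frac{n + (a-b)^2}{2} \ge \frac{n}{2} > 0 .
\]

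First I would rewrite this difference as the alternating sum $\sum_{k=0}^{D} (-1)^k \alpha(T,k)$. The crucial point is that $D$ is odd, so in each diametral pair of indices $(k, D-k)$ the two members have opposite parity; consequently $(-1)^{D-k} = -(-1)^k$, and regrouping the alternating sum over these pairs yields
\[
S_{\mathrm{even}} - S_{\mathrm{odd}} = \sum_{k=0}^{\lfloor D/2 \rfloor} (-1)^k \bigl( \alpha(T,k) - \alpha(T,D-k) \bigr).
\]
Taking absolute values and applying the triangle inequality bounds the left-hand side by $\sum_{k} |\alpha(T,k) - \alpha(T,D-k)| = Z(T)$. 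Combining this with the identity above gives $Z(T) \ge \frac{n + (a-b)^2}{2} \ge \frac{n}{2}$. Since $Z(T)$ is a sum of absolute values of integers it is itself an integer, so the bound $Z(T) \ge n/2$ upgrades to $Z(T) \ge \lceil n/2 \rceil$, which is exactly the claim.

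I expect the main conceptual step --- and the only place where real insight is needed --- to be the recognition that an odd diameter pairs even-distance counts with odd-distance counts, so that the palindromic defect $Z(T)$ dominates the parity difference $S_{\mathrm{even}} - S_{\mathrm{odd}}$; everything after that is the elementary evaluation of $S_{\mathrm{even}} - S_{\mathrm{odd}}$ in terms of the bipartition sizes. It is worth noting that this argument uses nothing beyond connectivity and bipartiteness, so it should apply verbatim to arbitrary connected bipartite graphs of odd diameter, and it in fact delivers the sharper bound $Z(T) \ge (n + (a-b)^2)/2$ with no need for the hypothesis $n > 4$.
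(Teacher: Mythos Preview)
Your proof is correct and is essentially the same as the paper's own argument: both compute $S_{\mathrm{even}} - S_{\mathrm{odd}} = \tfrac{(a-b)^2+n}{2}$ from the bipartition sizes, observe that an odd diameter pairs each index $k$ with an index $D-k$ of opposite parity so that $Z$ dominates this difference, and finish with integrality of $Z$. The paper likewise states and proves the result for arbitrary connected bipartite graphs of odd diameter, obtaining the same sharper bound $Z \ge \tfrac{(a-b)^2+n}{2}$ that you note.
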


\begin{conjecture}{\label{Con2}}
	There are no $H$-palindromic trees of odd diameter.
\end{conjecture}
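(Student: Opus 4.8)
The plan is to prove the stronger statement that \emph{no} connected bipartite graph of odd diameter can be $H$-palindromic; since every tree is bipartite, this settles Conjecture~\ref{Con2} at once. The whole argument rests on a single parity observation combined with one counting identity, so I would deliberately avoid any structural analysis of the tree and work only with the global distance distribution.

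First I would fix a connected bipartite graph $G$ with parts $A$ and $B$, writing $a=|A|$, $b=|B|$ and $n=a+b$. The elementary fact I would invoke is that in a bipartite graph a shortest path alternates between $A$ and $B$, so $d(u,v)$ is \emph{even} exactly when $u,v$ lie in the same part and \emph{odd} exactly when they lie in different parts. Consequently, summing the Hosoya coefficients over even $k$ counts the within-part pairs together with the $n$ trivial distance-$0$ pairs (recall $\alpha(G,0)=|V|=n$), while summing over odd $k$ counts the cross-part pairs:
\begin{equation*}
\sum_{k \text{ even}} \alpha(G,k) = n + \binom{a}{2} + \binom{b}{2}, \qquad \sum_{k \text{ odd}} \alpha(G,k) = ab .
\end{equation*}

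Next I would bring in the hypothesis. When $D$ is odd, the indices $0,1,\dots,D$ split into the pairs $\{k,\,D-k\}$, and since $k+(D-k)=D$ is odd, each such pair contains exactly one even and one odd index. The palindromic identity $\alpha(G,k)=\alpha(G,D-k)$ then equates an even-indexed coefficient with an odd-indexed one in every pair, which forces the two sums above to coincide:
\begin{equation*}
n + \binom{a}{2} + \binom{b}{2} = ab .
\end{equation*}
Using $\binom{a}{2}+\binom{b}{2}=\tfrac12\bigl(a^2+b^2-n\bigr)$ and clearing denominators reduces this to $n = 2ab - a^2 - b^2 = -(a-b)^2$. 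As $n\ge 1$ while $-(a-b)^2\le 0$, this is a contradiction, so no such graph exists.

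I expect no serious obstacle: once one records that a palindrome on an \emph{odd} diameter always pairs opposite parities, and that bipartiteness ties the parity of a distance to the two-colouring, the algebra is forced and collapses to an impossible equation. The only point demanding care is the bookkeeping of the distance-$0$ term, which must sit on the even side so that the two sums add up correctly to $n+\binom{n}{2}$. The genuinely conceptual step—and the reason the statement is cleaner for bipartite graphs than for arbitrary graphs—is precisely that bipartiteness makes the parity of a distance a global, part-determined quantity rather than something sensitive to the fine structure of the graph.
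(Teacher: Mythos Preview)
Your argument is correct and is essentially the same as the paper's: both rest on the observation that in a bipartite graph the parity of $d(u,v)$ is determined by the bipartition, so when $D$ is odd the palindromic pairing forces the even-index sum $n+\binom{a}{2}+\binom{b}{2}$ to equal the odd-index sum $ab$, which is impossible. The only difference is packaging: the paper computes the \emph{difference} of the two sums as $\tfrac{(a-b)^2+n}{2}\ge \tfrac{n}{2}$, thereby obtaining the quantitative bound $Z(G)\ge\lceil n/2\rceil$ (Conjecture~\ref{Con1}) and getting Conjecture~\ref{Con2} as a corollary, whereas you set the difference to zero and reach the contradiction $n=-(a-b)^2$ directly.
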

Evidently, the second conjecture is a consequence of the first one.
An intensive computations were done to test Conjecture \ref{Con1} in \cite{zbMATH02041897}.
So far no progress has been made on this problem.

In this work, we prove these conjectures for bipartite graphs.
We also prove that there are infinitely many $H$-palindromic trees of diameter $6$.

\section{Trees of odd diameter}\label{S:Odd}
In this Section, we consider bipartite graphs of odd diameter.

\begin{theorem}{\label{T:Odd}}
	Let $G$ be a bipartite graph on $n$ vertices of odd diameter. Then
$$
Z(G) \ge \left \lceil{\frac{n}{2}}\right \rceil .
$$
\end{theorem}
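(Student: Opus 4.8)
The plan is to exploit the parity of distances in a bipartite graph together with a single alternating sum of the Hosoya coefficients. Fix a bipartition $V = A \sqcup B$ with $|A| = a$, $|B| = b$, so that $a + b = n$, and recall that $G$ is connected. In a connected bipartite graph the distance between two vertices is even precisely when they lie in the same part and odd precisely when they lie in different parts. Consequently, for even $k \ge 2$ the coefficient $\alpha(G,k)$ counts only same-part pairs, while for odd $k$ it counts only different-part pairs; together with the convention $\alpha(G,0) = n$, this pins down the parity-graded totals completely.

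First I would evaluate the alternating sum $T := \sum_{k=0}^{D}(-1)^k \alpha(G,k)$ in closed form. Splitting into even and odd indices and using the parity observation gives
$$
T = \Big(n + \binom{a}{2} + \binom{b}{2}\Big) - ab = \frac{n + (a-b)^2}{2},
$$
since every same-part pair of distinct vertices is counted once among the even coefficients and every different-part pair once among the odd ones. In particular $T$ is a positive integer satisfying $T \ge n/2$.

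Second, I would connect $T$ to the distance to $H$-palindromicity, and here the oddness of $D$ is essential. Pairing the index $k$ with $D-k$ leaves no fixed index (as $D/2$ is not an integer), and using $(-1)^{D-k} = -(-1)^k$ rewrites the alternating sum as
$$
T = \sum_{k=0}^{(D-1)/2} (-1)^k\big(\alpha(G,k) - \alpha(G,D-k)\big).
$$
Applying the triangle inequality term by term then yields $Z(G) \ge |T| = T$.

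Finally I would close with an integrality and parity argument. Since both $Z(G)$ and $T$ are integers with $T = (n + (a-b)^2)/2 \ge n/2$, the desired bound $Z(G) \ge \lceil n/2 \rceil$ follows after checking the two cases for $n$: if $n$ is even it is immediate, and if $n$ is odd then $a-b$ is odd, so $(a-b)^2 \ge 1$ and $T \ge (n+1)/2 = \lceil n/2 \rceil$. The genuinely delicate point I anticipate is the very choice of the alternating weight $(-1)^k$, i.e.\ recognizing that the right signed combination of coefficients simultaneously (a) is forced to be large by the bipartite parity and (b) collapses, thanks to $D$ being odd, into exactly the signed pairwise differences bounded by $Z(G)$; once that bridge is in place, the remainder is routine bookkeeping with binomial coefficients and a parity check.
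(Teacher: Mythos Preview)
Your proposal is correct and follows essentially the same line as the paper: both exploit that in a connected bipartite graph distances are even within a part and odd across parts, compute the even-minus-odd total $\sum_{k}(-1)^k\alpha(G,k)=\tfrac{n+(a-b)^2}{2}$, and then use the oddness of $D$ to pair $k$ with $D-k$ and bound $Z(G)$ from below by this quantity. The only cosmetic difference is in the last step: the paper simply notes that $Z(G)$ is an integer at least $n/2$, whereas you argue the parity of $a-b$ directly to reach $\lceil n/2\rceil$.
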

\begin{proof}
Let $a$ and $b$ be the cardinalities of the bipartite parts of a graph $G$
of diameter $D$.
Obviously, the distance between two vertices of $G$ is even if and only if they belong to the same part.
Hence, the sum of $\alpha(G,i)$ over odd $i$ equals the number of pairs from different parts:
$$
\sum_{\substack{i=0\\i \text{ is odd}}}^{D} \alpha(G,i) = ab,	
$$
and the sum of $\alpha(G,i)$ over even $i$ equals the number of pairs from the first part and pairs from the second part:
$$
\sum_{\substack{i=0\\i \text{ is even}}}^{D} \alpha(G,i) = {a\choose 2} + a + {b\choose 2} + b = \frac{a^2 + a + b^2 + b}{2}.
$$
Since the diameter $D$ is odd, quantities $i$ and $D - i$ have different parity. Then
$$
 Z(G) \ge \sum_{\substack{i=0\\i \text{ is even}}}^{D}{\alpha(G,i)} - 	\sum_{\substack{i=0\\i \text{ is odd}}}^{D}{\alpha(G,i)} =
             \frac{(a-b)^2 + a + b}{2} \ge \frac{a+b}{2} = \frac{n}{2}.
$$
By definition, $Z(G)$ is an integer, so the claim follows.
\end{proof}

Since an arbitrary tree is a bipartite graph,  we immediately have the following result.
\begin{corollary}
There are no $H$-palindromic trees of odd diameter.
\end{corollary}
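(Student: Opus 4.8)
The plan is to read the corollary off Theorem \ref{T:Odd} directly. The only structural fact required is that every tree is bipartite: a tree is acyclic and therefore contains no odd cycle, so two-coloring its vertices by the parity of the distance to a fixed root exhibits a bipartition. This places every tree of odd diameter squarely within the hypothesis of Theorem \ref{T:Odd}, so no new combinatorial argument is needed.

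First I would fix an arbitrary tree $T$ on $n$ vertices of odd diameter $D$ and apply Theorem \ref{T:Odd} to $G = T$, obtaining $Z(T) \ge \left\lceil \frac{n}{2} \right\rceil$. Since a graph of odd diameter has positive diameter and hence at least two vertices, the bound satisfies $\left\lceil \frac{n}{2} \right\rceil \ge 1$, so $Z(T) > 0$. Recalling from the introduction that a graph is $H$-palindromic if and only if its distance to $H$-palindromicity vanishes, a strictly positive value $Z(T) > 0$ forbids $T$ from being $H$-palindromic. As $T$ was an arbitrary tree of odd diameter, the claim follows.

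There is no genuine obstacle here: the entire combinatorial content lives inside Theorem \ref{T:Odd}, and the corollary is its immediate specialization to trees via the observation that trees are bipartite. The only point meriting a moment of care—essentially tautological—is confirming that the lower bound $\left\lceil \frac{n}{2} \right\rceil$ is strictly positive, which is guaranteed the instant $T$ is nontrivial, as it must be to have a positive (indeed odd) diameter.
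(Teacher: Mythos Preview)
Your argument is correct and matches the paper's own reasoning: the corollary is obtained directly from Theorem~\ref{T:Odd} by noting that every tree is bipartite, so $Z(T)\ge\lceil n/2\rceil>0$ and $T$ cannot be $H$-palindromic. The paper states this in a single sentence, and your write-up simply spells out the same deduction with the extra observation that $\lceil n/2\rceil\ge 1$.
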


The bound of Theorem \ref{T:Odd} is sharp. For instance,
consider the Hamming graph $H(m,2)$ of order $2^m$.
Its vertex set consists of all binary words of length $m$ with the usual Hamming distance.

\begin{proposition}{\label{P:Hamm}}
For the Hamming graph $H(m,2)$, $Z(H(m,2))=2^{m-1}$.
\end{proposition}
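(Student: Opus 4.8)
The plan is to compute the coefficients $\alpha(H(m,2),k)$ in closed form and then exploit the symmetry of the binomial coefficients. First I would record the two elementary features of $H(m,2)$ that drive everything. Its diameter equals $m$, since the pair of binary words attaining the maximal Hamming distance are complementary and differ in all $m$ coordinates. Moreover, for each fixed word $x$ the number of words at Hamming distance exactly $k$ from $x$ is $\binom{m}{k}$, because such words are obtained by choosing which $k$ of the $m$ coordinates to flip.

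Next I would count the unordered pairs. Summing $\binom{m}{k}$ over all $2^m$ choices of the base word $x$ counts every unordered pair at distance $k\ge 1$ exactly twice, hence
$$
\alpha(H(m,2),k) = 2^{m-1}\binom{m}{k}, \qquad 1 \le k \le m,
$$
while $\alpha(H(m,2),0) = 2^m$ is just the number of vertices. It is worth noting that the formula $2^{m-1}\binom{m}{k}$ fails precisely at $k=0$, where it would give $2^{m-1}$ instead of $2^m$; this single discrepancy is exactly what generates the final answer.

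Finally I would evaluate the sum defining $Z$. For every $k$ with $1 \le k \le m-1$ the identity $\binom{m}{k}=\binom{m}{m-k}$ yields $\alpha(H(m,2),k)=\alpha(H(m,2),m-k)$, so each such term of $Z$ vanishes; in the case of even $m$ the middle term $k=m/2$ is trivially zero as well. The only surviving contribution is the term $k=0$, namely
$$
\big| \alpha(H(m,2),0) - \alpha(H(m,2),m) \big| = \big| 2^m - 2^{m-1} \big| = 2^{m-1},
$$
whence $Z(H(m,2)) = 2^{m-1}$.

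There is no genuine obstacle in this argument; it is a direct computation resting on the symmetry of Pascal's triangle. The only point demanding care is to keep the vertex-count coefficient $\alpha(H(m,2),0)=2^m$ strictly separate from the distance-$k$ formula valid for $k\ge 1$, since it is exactly the asymmetry between $\alpha(\cdot,0)$ and $\alpha(\cdot,m)$ that survives and makes the lower bound of Theorem \ref{T:Odd} sharp (for odd $m$, where $n=2^m$ gives $\lceil n/2\rceil = 2^{m-1}$).
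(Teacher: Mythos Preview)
Your proof is correct and follows essentially the same approach as the paper: both arguments use that each vertex has $\binom{m}{k}$ vertices at distance $k$, invoke the symmetry $\binom{m}{k}=\binom{m}{m-k}$ to kill every term of $Z$ with $1\le k\le m-1$, and identify the sole surviving contribution $|\alpha(G,0)-\alpha(G,m)|=|2^m-2^{m-1}|=2^{m-1}$. Your write-up is slightly more explicit in recording the closed form $\alpha(H(m,2),k)=2^{m-1}\binom{m}{k}$ for $k\ge 1$, but the underlying idea is identical.
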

\begin{proof}
By definition, the diameter of $H(m,2)$ is equal to $m$.
Every vertex of the graph has ${m \choose k}$ neighbors at distance $k$ for $0 \leq k \leq m$. 	
Hence,  $\alpha(G,k) = \alpha(G,m-k)$ for $1 \leq k \leq m-1$,
$\alpha(G,0)=2^m$, and $\alpha(G,m)=2^{m-1}$. Then
$$
Z(H(m,2))=\sum_{k=1}^{\left \lfloor{\frac{m}{2}}\right \rfloor}{\biggl| {m \choose k}-{m \choose m-k} \biggr|}+ |2^m-2^{m-1} | =2^{m-1},
$$
that is a half of the number of vertices of $H(m,2)$.
\end{proof}

\section{Trees of diameter $6$}\label{S:even}
As it was discussed in Section \ref{S:Introduction}, only five $H$-palindromic trees of even diameter are known.
It is easy to show that there are no such trees of diameter $2$ and $4$.
It is sufficient to consider a general model of such a tree and find coefficients of Hosoya polynomial
by direct calculations. However, trees of diameter $6$ may be $H$-palindromic.

\begin{theorem}
	There is an infinite number of $H$-palindromic trees of diameter $6$.
\end{theorem}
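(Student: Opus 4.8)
Since $D=6$ is even, any such tree has a unique central vertex $c$, and every vertex lies within distance $3$ of $c$. Write $L_j$ for the number of vertices at distance $j$ from $c$, and for each branch (the maximal subtree hanging at a neighbour of $c$) let $t_i$ and $s_i$ be the numbers of its vertices at distance $3$ and $2$ from $c$. Because distances $5$ and $6$ can only be realised between vertices lying in distinct branches, one obtains closed forms $\alpha(T,6) = \tfrac12\bigl(L_3^2 - \sum_i t_i^2\bigr)$ and $\alpha(T,5) = L_3 L_2 - \sum_i t_i s_i$, whereas $\alpha(T,2) = \sum_{v} \binom{\deg v}{2}$ and $\alpha(T,4)$ decomposes into the cross-branch pairs of types $\{3,1\}$ and $\{2,2\}$ together with within-branch pairs of distance-$3$ vertices whose paths to $c$ first meet at a distance-$1$ vertex. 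For $D=6$ palindromicity amounts to exactly three equations, $\alpha(T,0)=\alpha(T,6)$, $\alpha(T,1)=\alpha(T,5)$ and $\alpha(T,2)=\alpha(T,4)$, since the middle coefficient is automatically matched.

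Using $\alpha(T,0)=n$ and $\alpha(T,1)=n-1$, the first two conditions read $\alpha(T,6)=n$ and $\alpha(T,6)-\alpha(T,5)=1$. I would first record the obstruction these impose: if all the deep branches are identical ``brooms'' (a distance-$2$ vertex carrying several distance-$3$ leaves), then $\alpha(T,6)-\alpha(T,5)$ is a multiple of the common broom size and hence can never equal $1$. This rules out every naive symmetric family and tells me that a working construction must mix branches of several distinct shapes and sizes.

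The plan is therefore to fix a construction $T(\cdots)$ consisting of the centre $c$ together with a controlled collection of branches drawn from a small library of types, the numbers of vertices of each kind being integer parameters: brooms (which feed $\alpha(T,2)$), ``fans'' of the form ``a distance-$1$ vertex carrying several length-$1$ legs'' (which feed the within-branch part of $\alpha(T,4)$), pendant paths of length $2$, and pendant leaves at $c$. Substituting the resulting level counts into the formulas above converts the three palindromicity conditions into three polynomial Diophantine equations in the parameters. I would then use the two identities $\alpha(T,6)=n$ and $\alpha(T,6)-\alpha(T,5)=1$ to eliminate two parameters, leaving the single equation $\alpha(T,2)=\alpha(T,4)$ in the remaining ones.

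After this elimination the surviving equation should be a quadratic in one parameter whose discriminant is a quartic in another, so that solvability becomes the condition that this quartic be a perfect square, i.e.\ an integral-point condition on a curve. I would engineer the branch multiplicities so that this curve carries infinitely many integer points, concretely by matching the quartic to a Pell form $x^2 - Dy^2 = N$ whose solution set is infinite. Each admissible solution must finally be checked to yield positive parameters, a genuine tree of diameter exactly $6$ (at least two branches reaching distance $3$ and none reaching farther), and a distinct value of $n$, so that the trees produced are pairwise non-isomorphic. The hard part will be exactly this design step: the broom obstruction shows that the obvious families collapse to finitely many solutions, so the difficulty is to choose branch types giving enough independent leverage on $\alpha(T,2)$ and $\alpha(T,4)$ that the final constraint is a Pell-type equation rather than a sporadically solvable one, all while keeping every level count realisable by an actual subtree.
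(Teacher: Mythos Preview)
Your proposal is a strategy outline, not a proof. You correctly anticipate the paper's approach---parameterise a family of diameter-$6$ trees, impose the three palindromicity equations $\alpha(T,0)=\alpha(T,6)$, $\alpha(T,1)=\alpha(T,5)$, $\alpha(T,2)=\alpha(T,4)$, eliminate two parameters from the first two, and reduce the third to a Pell-type equation---but you never carry it out. Everything from ``The plan is therefore\ldots'' onward is conditional: you speak of a ``library of branch types'' without fixing one, say the surviving equation ``should be'' a quadratic with quartic discriminant, and admit that ``the hard part will be exactly this design step''. That design step \emph{is} the proof; without it nothing has been shown.

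The paper does precisely what you propose, but with a concrete and much simpler family than your framework suggests. It takes a single path $v_1v_2\cdots v_6$, attaches one pendant at $v_2$, one pendant $u$ at $v_5$, and then $t,s,a,b$ further leaves at $v_4,v_5,v_6,u$ respectively. With only four integer parameters the coefficients $\alpha(T,i)$ are computed in closed form; the first two palindromicity equations force $s=t+3=(a+b-5)/2$, and the remaining equation $\alpha(T,2)=\alpha(T,4)$ becomes, after the substitution $x=a-3b+3$, $y=2b-3$, the Pell equation $x^{2}-2y^{2}=-94$. This has the base solution $(x_0,y_0)=(2,7)$ and the recursion $(x_{n+1},y_{n+1})=(3x_n+4y_n,\,2x_n+3y_n)$, and one checks parity so that $a_n,b_n,s_n,t_n$ are all non-negative integers with $a_n+b_n$ odd and at least $11$. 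Your broom obstruction is a valid heuristic for why symmetric families fail, but note that the working construction is far less elaborate than the multi-type library you envisage: two asymmetric ``deep'' branches (at $v_6$ and $u$) already suffice. To turn your outline into a proof you must exhibit such a family and verify that the reduction actually lands on a Pell equation with infinitely many admissible solutions.
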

\begin{proof}
	For non-negative integers $a$, $b$, $s$ and $t$, construct a tree $T = T(a, b, s, t)$ by the following steps:
	\begin{enumerate}
		\item take a path of length five: $(v_1,v_2,v_3,v_4,v_5,v_6)$,
		\item attach one pendent vertex to vertex $v_2$ and one pendent vertex $u$ to vertex $v_5$,
		\item attach $t$, $s$, $a$ and $b$ new pendent vertices to vertices $v_4$, $v_5$, $v_6$ and $u$, respectively (see Fig.~1).
	\end{enumerate}
	
\begin{center}	
	\begin{tikzpicture}[thick, scale=1.4, every node/.style={scale=0.9}]
		\tikzstyle{n}=[draw,circle,fill=black,minimum size=0pt, text width=0.0cm,
		inner sep=1.5pt]
		\tikzstyle{e}=[inner sep=0pt, outer sep=0pt]
		\node[n] at (1, 1){};
		\node[n, label={$v_1$}] at (1, 3){};
		\node[n, label={$v_2$}] at (2.5, 2){};
		\node[n, label={$v_3$}] at (4, 2){};
		\node[n, label=270:{$v_4$}] at (5.5, 2){};
		\node[n, label={$v_5$}] at (7, 2){};
		\node[n, label={$v_6$}] at (8.5, 1){};
		\node[n, label={$u$}] at (8.5, 3){};
		
		\node[n] at (6.3, 3){};
		\node[n] at (4.7, 3){};
		\node[n] at (5.1, 3){};
		\node[e] at (5.7, 3){$\ldots$};
		
		\node[n] at (5.9, 0.5){};
		\node[n] at (6.3, 0.5){};
		\node[n] at (7.2, 0.5){};
		\node[e] at (6.7, 0.5){$\ldots$};
		
		\node[n] at (9.5, 2.4){};
		\node[n] at (9.5, 2.7){};
		\node[n] at (9.5, 3.6){};
		\node[e] at (9.5, 3.2){$\vdots$};
		
		\node[n] at (9.5, 0.4){};
		\node[n] at (9.5, 0.7){};
		\node[n] at (9.5, 1.6){};
		\node[e] at (9.5, 1.2){$\vdots$};
		
		\draw (5.5, 3) ellipse (1cm and 0.4cm);
		\node[e] at (5.5, 3.6){\large $t$};
		\draw (6.5, 0.5) ellipse (1cm and 0.4cm);
		\node[e] at (6.5, -0.2){\large $s$};
		\draw (9.5, 1) ellipse (0.4cm and 0.8cm);
		\node[e] at (10.2, 1){\large $a$};
		\draw (9.5, 3) ellipse (0.4cm and 0.8cm);
		\node[e] at (10.2, 3){\large $b$};
		
		\draw (1, 1) -- (2.5, 2);
		\draw (1, 3) -- (2.5, 2);
		\draw (4, 2) -- (2.5, 2);
		\draw (4, 2) -- (5.5, 2);
		\draw (7, 2) -- (5.5, 2);
		\draw (7, 2) -- (8.5, 1);
		\draw (7, 2) -- (8.5, 3);
		\draw (5.5, 2) -- (4.7, 3);
		\draw (5.5, 2) -- (5.1, 3);
		\draw (5.5, 2) -- (6.3, 3);
		\draw (7, 2) -- (5.9, 0.5);
		\draw (7, 2) -- (6.3, 0.5);
		\draw (7, 2) -- (7.2, 0.5);
		\draw (8.5, 1) -- (9.5, 1.6);
		\draw (8.5, 1) -- (9.5, 0.7);
		\draw (8.5, 1) -- (9.5, 0.4);
		\draw (8.5, 3) -- (9.5, 2.4);
		\draw (8.5, 3) -- (9.5, 2.7);
		\draw (8.5, 3) -- (9.5, 3.6);
	\node[below,font=\large\bfseries] at (current bounding box.south) {Figure 1. Construction of $H$-palindromic tree $T(a,b,s,t)$};	
	\end{tikzpicture}
\end{center}	

Counting pairs of vertices at a given distance in the tree of diameter 6 in Fig.~1,
one can calculate values of coefficients $\alpha(T,i)$:

$$
	\begin{cases}
		  \alpha(T,0) = a + b + s + t + 8,   \\
		  \alpha(T,1) = a + b + s + t + 7,   \\
		  \alpha(T,2) = {a+1\choose2} + {b+1\choose2} + {s+3\choose2} + {t+2\choose2} + 4,   \\
		  \alpha(T,4) = (s + 2) + (a + b + 2)(t + 1) + ab,   \\
		  \alpha(T,5) = a + b + 2(s + 2),   \\
		  \alpha(T,6) = 2(a + b).
	\end{cases}
$$
	
Equalities $\alpha(T,0) = \alpha(T,6)$ and $\alpha(T,1) = \alpha(T,5)$ are
satisfied under condition $s = t + 3 = \frac{a + b - 5}{2}$.
Since $s$ and $t$ are non-negative integers, the sum $a+b$ should be odd and not less than $11$.
So it remains to satisfy the equation $\alpha(T,2) = \alpha(T,4)$.
After all necessary calculations, one can rewrite this equality in the following form:

$$
(a - 3b + 3)^2 - 2(2b - 3)^2 + 94 = 0.
$$

Using substitution $x = a - 3b + 3$ and $y = 2b - 3$, the last equality can be presented as the Pell equation
$$
x^2 - 2y^2 = -94.
$$
Methods for solving Pell equation can be found in \cite{zbMATH00886138}.
It has an infinite series of integer solutions starting from  $(x, y) = (2, 7)$.
All solutions can be represented by the following recurrent relations:
$$
\begin{cases}
x_{n+1} = 3x_n + 4y_n \\
y_{n+1} = 3y_n + 2x_n
\end{cases}
$$
with the initial conditions $x_0 = 2$ and $y_0 = 7$.
It easy to see that $x_n$ is always even and $y_n$ is odd.
Therefore $a_n = x_n + \frac{3y_n + 3}{2}$ and $b_n = \frac{y_n + 3}{2}$ are both integers
and their sum $a_n + b_n$ is odd and not less than $a_0 + b_0 = 19 \ge 11$.
So, every pair $(a_n, b_n)$ corresponds to some $H$-palindromic tree.
\end{proof}

Table~\ref{T2} shows solutions of the Pell equation and parameters of the initial part of the constructed
series of $H$-palindromic vertex trees $T$ of diameter 6.

\begin{table}[htb] \label{T2}
\centering
\begin{tabular}{rrrrrrrrc}
\hline
$n$ &$x_n$ &$y_n$ & $|V|$ & $a$ & $b$ & $s$ & $t$  &  coefficients of $H(T,\lambda)$\\ \hline
 0  &   2  &  7   &  38    & 14   &  5  &  7  &  4   &  (38, 37, 184, 223, 184, 37, 38)      \\
 1  &   34  &  25   &  174    & 73   &  14  &  41  &  38   &  (174, 173, 4536, 5459, 4536, 173, 174) \\
 2  &   202  &  143   &  982    & 418   &  73  &  243  &  240   &  (982, 981, 149572, 179583, 149572, 981, 982) \\  
 3  &   1178  &  833   &  5694    & 2429   &  418  &  1421  &  1418   &  (5694, 5693, 5059476, 6071939, 5059476, 5693, 5694) \\  \hline
\end{tabular}
\caption{First $H$-palindromic trees $T$ of diameter 6.}
\end{table}

It will be interesting to answer the following question.

\begin{problem}
Does there exist an infinite family of $H$-palindromic trees of even diameter $D\ge 8$?
\end{problem}

We suppose that ideas from Section \ref{S:even} may be successfully applied for small values of $D$.

\section{Acknowledgements}

This research project was started during the Summer Research Program for Undergraduates 2021 organized by the Laboratory of Combinatorial and Geometric Structures at MIPT. This program was funded by the Russian Federation Government (Grant number 075-15-2019-1926). The work of Konstantin Vorob'ev was carried out within the framework of the state contract of the Sobolev Institute of Mathematics (project no. 0314-2019-0016). The work of Alexandr Grebennikov is supported by Ministry of Science and Higher Education of the Russian Federation,
agreement no. 075-15-2019-1619. Authors are grateful to Andrey Dobrynin for interesting discussions on the theme of this paper.

\bibliography{bib_Hosoya}

\providecommand\href[2]{#2} \providecommand\url[1]{\href{#1}{#1}}
  \def\DOI#1{{\small {DOI}:
  \href{http://dx.doi.org/#1}{#1}}}\def\DOIURL#1#2{{\small{DOI}:
  \href{http://dx.doi.org/#2}{#1}}}
\begin{thebibliography}{10}

\bibitem{CDGH1999}
G.~{Caporossi}, A.A. {Dobrynin}, I.~{Gutman}, and P.~{Hansen}.
\newblock {Trees with palindromic Hosoya polynomials}.
\newblock {\em {Graph Theory Notes N. Y.}}, 37:10--16, 1999.

\bibitem{zbMATH02041897}
G.~{Caporossi} and P.~{Hansen}.
\newblock {Variable neighborhood search for extremal graphs. V: Three ways to
  automate finding conjectures}.
\newblock {\em {Discrete Math.}}, 276(1-3):81--94, 2004.

\bibitem{D1994GTNNY}
A.~A. {Dobrynin}.
\newblock {Graphs with palindromic Wiener polynomials}.
\newblock {\em {Graph Theory Notes N. Y.}}, 27:50--54, 1994.

\bibitem{zbMATH01656310}
A.~A. {Dobrynin}, R.~{Entringer}, and I.~{Gutman}.
\newblock {Wiener index of trees: Theory and applications}.
\newblock {\em {Acta Appl. Math.}}, 66(3):211--249, 2001.

\bibitem{zbMATH01783464}
A.~A. {Dobrynin}, I.~{Gutman}, S.~{Klav\v{z}ar}, and P.~{\v{Z}igert}.
\newblock {Wiener index of hexagonal systems}.
\newblock {\em {Acta Appl. Math.}}, 72(3):247--294, 2002.

\bibitem{G1993}
I.~{Gutman}.
\newblock {Some properties of the Wiener polynomial}.
\newblock {\em {Graph Theory Notes N. Y.}}, 25:13--18, 1993.

\bibitem{GEI1999}
I.~{Gutman}, E.~{ Estrada}, and O.~{Ivanciuc}.
\newblock {Some properties of the Wiener polynomial of treesl}.
\newblock {\em {Graph Theory Notes N. Y.}}, 36:7--13, 1999.

\bibitem{GZDI2012}
I.~{Gutman}, Y.~{Zhang}, M.~{Dehmer}, and A.~{Ili\'{c}}.
\newblock Altenburg, {W}iener, and {H}osoya polynomials.
\newblock In I.~{Gutman} and B.~{Furtula}, editors, {\em Distance in Molecular
  Graphs --- Theory}, pages 49--70. Univ. Kragujevac, Kragujevac, 2012.

\bibitem{zbMATH04029576}
H.~{Hosoya}.
\newblock {On some counting polynomials in chemistry}.
\newblock {\em {Discrete Appl. Math.}}, 19:239--257, 1988.

\bibitem{zbMATH06680393}
M.~{Knor}, R.~{\v{S}krekovski}, and A.~{Tepeh}.
\newblock {Mathematical aspects of Wiener index}.
\newblock {\em {Ars Math. Contemp.}}, 11(2):327--352, 2016.

\bibitem{zbMATH00886138}
Don {Redmond}.
\newblock {\em {Number theory. An introduction}}.
\newblock Basel: Marcel Dekker, 1996.

\bibitem{W1947}
H.~{Wiener}.
\newblock {Structural Determination of Paraffin Boiling Points}.
\newblock {\em {J. Am. Chem. Soc.}}, 69(1):17--20, 1947.

\end{thebibliography}

\end{document}